\DeclareFontFamily{OML}{rsfs}{\skewchar\font'177}
\DeclareFontShape{OML}{rsfs}{m}{n}{ <5> <6> rsfs5 <7> <8> <9>
rsfs7 <10> <10.95> <12> <14.4> <17.28> <20.74> <24.88> rsfs10 }{}
\DeclareMathAlphabet{\mathfs}{OML}{rsfs}{m}{n}
\newtheorem{theorem}{Theorem}
\newtheorem{definition}[theorem]{Definition}
\newtheorem{lemma}[theorem]{Lemma}
\newtheorem{prop}{Proposition}[section]
\newtheorem{thm}[prop]{Theorem}
\newtheorem{lem}[prop]{Lemma}
\newtheorem{cor}[prop]{Corollary}
\newtheorem{rem}[prop]{Remark}
\newtheorem{clm}[prop]{Claim}
\newcommand{\BE}{{\mathbb{E}}}
\newcommand{\BH}{{\mathbb{H}}}
\newcommand{\BN}{{\mathbb{N}}}
\newcommand{\BR}{{\mathbb{R}}}
\newcommand{\BS}{{\mathbb{S}}}
\newcommand{\BZ}{{\mathbb{Z}}}
\newcommand{\CE}{{\mathcal{E}}}
\newcommand{\CF}{{\mathcal{F}}}
\newcommand{\ind}{{\mathbbm{1}}}
\renewcommand{\prob}{{\bf P}}
\newcommand{\bae}{\begin{equation}\begin{aligned}}
\newcommand{\eae}{\end{aligned}\end{equation}}
\newcommand{\ev}{\mathbf{E}}
\newcommand{\pr}{\mathbb{P}}
\newcommand{\Z}{\mathbb{Z}}
\newcommand{\milf}{\text{SIDLA}}
\newcommand{\dd}{\partial}
\newcommand{\om}{{\omega}}
\newcommand{\lam}{{\lambda}}
\begin{document}

\numberwithin{equation}{section} \numberwithin{figure}{section}

\title{Stretched IDLA}

\author{Noam Berger\footnote{The Hebrew University of Jerusalem and Technische Universit\"at M\"unchen}, Jacob J. Kagan\footnote{The Weizmann Institute of Science}
, Eviatar B. Procaccia\footnotemark[\value{footnote}]}
\maketitle

\begin{abstract}
We consider a new IDLA - particle system model, on the upper half planar lattice, resulting in an infinite forest covering the half plane. We prove that almost surely all trees are finite. 
\end{abstract}

\maketitle



\section{Introduction}
The model of Internal Diffusion Limited Aggregation (IDLA) was introduced by Meakin and Deutch \cite{meakin1986formation} as a model for some chemical reactions, particle coalescence and aggregation. IDLA was first studied rigorously by Diaconis and Fulton \cite{diaconis1991growth} and by Lawler, Bramson and Griffeath \cite{1992}. IDLA is a growth model, starting with a point aggregate $0\in\BZ^2$, $A(0)=\{0\}$. At each step a particle exits the origin, performs a simple random walk (SRW) and stops at the first position outside the aggregate, this position is then added to the aggregate i.e. $A(n+1)=A(n)\cup v_n$, where $v_n$ is the first exit position of a SRW starting at $0$ from $ A(n)$. In \cite{1992}, Lawler, Bramson and Griffeath prove the asymptotic shape
of the IDLA aggregate converges to the Euclidean ball. Asselah and Gaudilli\`ere \cite{asselah2010note} and independently Jerison, Levine and Sheffield \cite{jerison2012logarithmic} recently proved the long standing conjecture, that the fluctuations from the Euclidean ball are at most logarithmic. 

In this paper we consider an IDLA process in continuous time, introduced to us by Itai Benjamini, which we call Stretched IDLA ($\milf$).  This process starts with an infinite line. Every vertex on the line has a Poisson clock, every ring initiates an oriented SRW that can add an edge to the tree rooted at the vertex whose clock rang. We show that even though eventually all vertices are covered, all trees are finite almost surely. See Figurs \ref{fig:finitetree} and \ref{fig:finitetree2} for two simulations of the process. The tree rooted at $0$ is colored red. In initiating the IDLA in an infinite line, we lose the simplicity of a discrete process, but we gain ergodicity which we use heavily in our analysis. Our main tool is coupling the $\milf$ to some first passage percolation model, with exponentially increasing weights, and proving all trees are finite in the percolation setting. 

A natural question that arises is universality of the finite tree property. In the last section we prove that all trees are finite in another first passage percolation model with exponentially decreasing weights. Another interesting problem is to characterize the decay of tree height. See Remark \ref{rem:zerner} for further discussion.

\begin{figure}
\centering
\subfloat[$\milf$ on $\Z^2$]{\includegraphics[width=0.45\textwidth]{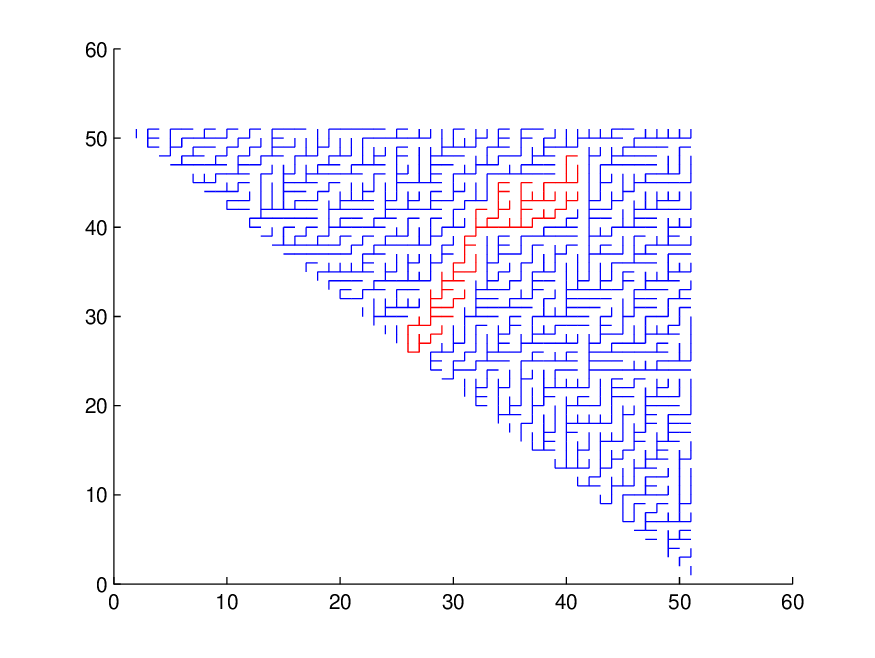}\label{fig:finitetree}}\quad
\subfloat[$\milf$ on the rotated lattice $\BH$]{\includegraphics[width=0.45\textwidth]{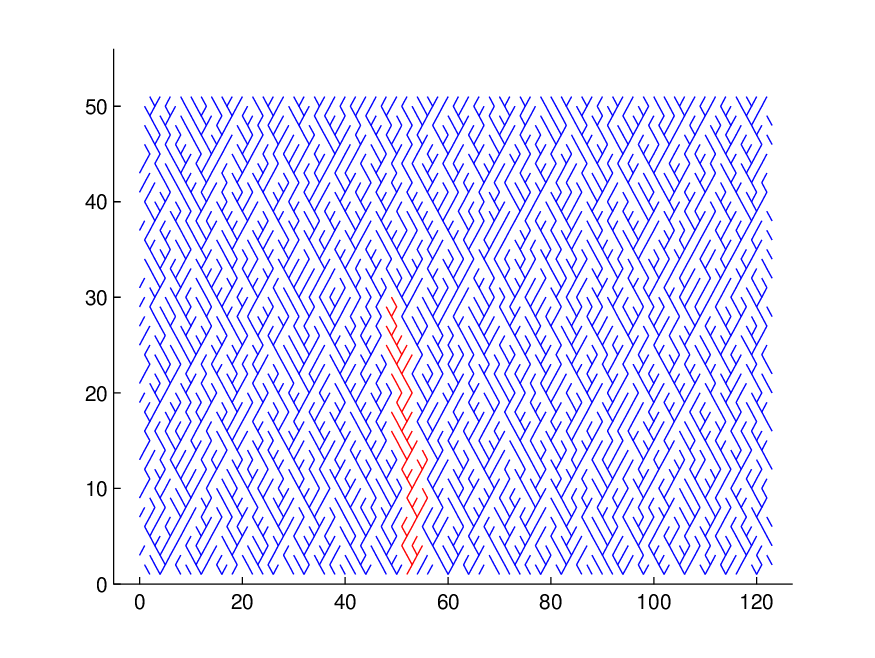}
\label{fig:finitetree2}}
\caption{Simulations of the SIDLA process.}
\end{figure}
       
\subsection{General Notation}
 We consider the rotated $\Z^2$ lattice in the upper half-plane re-scaled by $\sqrt 2$. Hereon we abbreviate it $\BH$,
$$\BH = \{(x,y)\in\BZ^2:x+y \in 2\cdot\Z,\:y\ge0  \} .$$
Denote by $\theta_l =(-1,1)$ and $\theta_r = (1,1)$ the vectors spanning the lattice.
Viewed as a directed graph, every site $v=(x,y)$ is connected to the sites $v+\theta_l=(x-1,y+1)$ and $v+\theta_r= (x+1,y+1)$. Abbreviate $\CE$ the set of edges in $\BH$. For a vertex $v = (x,y)$, denote the vertex height by $h(v) = y$. For an edge $e=(v,w)$, let $h(e)=\max\{h(v),h(w)\}$. 
 It will also be useful to define the cone of $v$, $C(v)=\{v+i\theta_l+j\theta_r:i,j\in\BN\cup\{0\}\}$, we write $e=(w,z)\in C(v)$ if $w,z\in C(v)$. This is the set of vertices and edges that can be reached from $x$ using directed edges. Finally we denote $\dd\BH = \{(x,0):x\in2\cdot\Z\}$.
See Figure \ref{fig:lattice} for a summary of the notation.

\begin{figure}
\begin{center}
\includegraphics[width=0.35\textwidth]{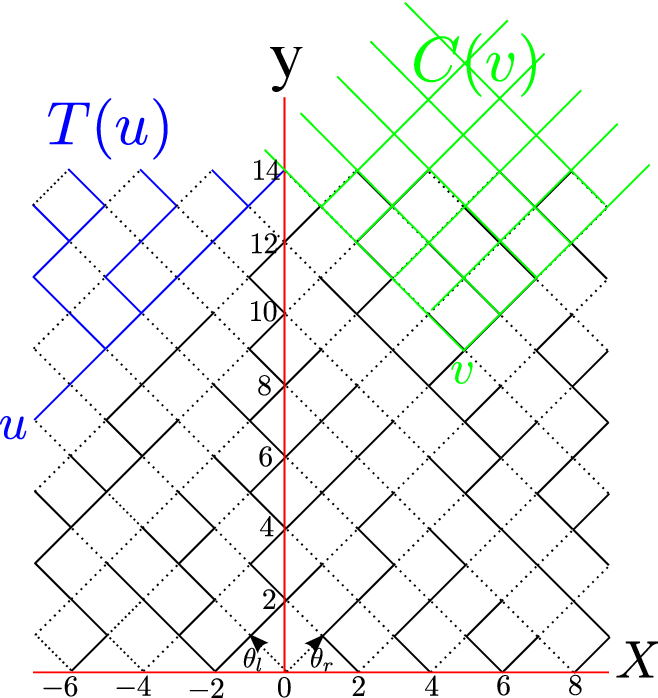}
\caption{The oriented lattice\label{fig:lattice}}
\end{center}
\end{figure}

A disjoint oriented rooted forest in $\BH$, is a collection of rooted trees $\{T(v)\}_{v\in\partial\BH}\subset\CE$, such that for every $v\neq v'$, $T(v)\cap T(v')=\emptyset$, and every rooted tree $T(v)$ is the union of oriented paths for the form $(e_1=(x_1,x_2),e_2=(x_2,x_3),\ldots,e_n=(x_n,x_{n+1}))$ starting from $x_1=v\in\partial\BH$ and $\forall i\le n,~x_{i+1}-x_i\in\{\theta_l,\theta_r\}$. For every tree $T(v)$ and a vertex $u\in\BH$, if there exists some $w\in\BH$ such that $(w,u)$ or $(u,w)$ is in $T(v)$, we abuse notations and say that $u\in T(v)$.

Let $T$ be an oriented tree in a disjoint oriented rooted forest. Denote by $\partial T$, the edge boundary of $T$ i.e. $\dd T = \{e = (u,v)\in\CE: v\notin T,  u\in T\}$. Abbreviate $\partial ^n T$ the boundary of height $n$, i.e. $\partial ^n T=\{e\in\partial T, h(e)=n\}$. The height of a tree is denoted $h(T)=\sup_{e\in\partial T}\{h(e)\}$. Denote by $T^n$ the vertices of height $n$ in $T$ i.e. $T^n = \{v| h(v)=n, v\in T\}.$ For any set $A\subset\BH$, let $\partial^{in}A=\{u\in A:\exists v\notin A, (u,v)\in\CE\text{ or }(v,u)\in\CE\}$.

We call a homogenous Poisson process $N(t)$ such that $N(t+\tau)-N(t)$ is distributed Poisson$(\lambda\tau)$ a Poisson clock of rate $\lambda$. When omitting the time of the Poisson clock we refer to the set of ring times i.e $N=\{t\in\BR^+:\forall s<t,~N(t)>N(s)\}$.
\subsection{SIDLA model description and general remarks}
In this section we give a description of the $\milf$, the well-definedness is proved below.

We construct the $\milf$ process on $\BH$. 
Let $\mathcal{F}$ be the set of disjoint oriented rooted forests in $\BH$, and let $\mathfs{F}$ be the $\sigma$-algebra spanned by the standard projection maps to $\CE$.  For every $t\ge 0$, let $\prob_t$ be a measure on $\CF$. 
The process starts with the empty forest i.e. $\prob_0(\forall v,T(v,0)=\emptyset)=1$.
Assume $\prob_t$ is defined and let $T(v,t)$ to be the tree rooted at $v$ sampled from $\prob_t$.
  

At each site $v$ found on the $x$ axis place an independent Poisson clock of rate 1. Given that a ring occurred at time $t_0>t$ an edge $e = (u_1,u_2)$ is adjoined to the tree according to the following law: 

\begin{equation*}
\prob_{t_0}(T(v,t_0)=T(v,t_0^-)\cup e) = \begin{cases}
 2^{-h(u_2)} & \text{if } u_1\in T(v,t_0^-)\text{ and } u_2\notin\bigcup_{\overset{v'\in\dd\BH}{v'\neq v}}T(v',t_0)\\
0 & \text{Otherwise}
\end{cases},
\end{equation*}
for every $e\neq e'$, $\prob_{t_0}(T(v,t_0)=T(v,t_0^-)\cup e\cup e')=0$,
where \[t_0^-=t_0^-(v)=\sup\{s>0:s<t_0, \text{clock at site }v\text{ rang at time }s\} .\]

This process can be described intuitively in terms of particles: each time $t_{0}$, the clock at a vertex $u\in\dd\BH$ rings, a particle is created, and starts an instantaneous oriented random walk subject to the following law: 

\begin{enumerate}
\item 
Being at vertex $v\in T(u,t_0^-)$, the particle chooses one of its neighbours $v+\theta_r$ and $v+\theta_l$ with probability $\frac{1}{2}$, call the choice $a$. 
\item
If $a$ is free, the particle occupies the edge $(v,a)$.
\item If $a\in \bigcup_{\overset{x\in\dd\BH}{x\neq u}} T(x,t_0)$ or $a\in T(u,t_0^-)$ but $(v,a)\notin T(u,t_0^-)$  the particles vanishes. 

\item 
Else it continues as described in (1.) from the newly reached vertex. 

\end{enumerate}
Since the process is defined in continuous time the question of well-definedness arises. However the geometry of $\BH$ greatly simplifies the matter. 
\begin{lem}
The process is well defined and $\prob_t$ converges strongly to a measure $\prob$ on disjoint oriented forests. 
\end{lem}
\begin{proof}
Each edge $e\in\CE$ can  a priori be reached only by a finite number of trees i.e. $|\{v\in\partial\BH:e\in C(v)|=h(e)$. For every $t>0$ we can order the rings of the Poisson clocks associated to the set of trees up to time $t$. For each ring we have an oriented random walk path, and $e$ can be joined to at most one tree. The well definedness of the process  for every $t\ge0$ follows. 

If some edge $e\in\CE$ is contained in some tree $T(v,t)$, then for every $s>t$, $e\in T(v,s)$ $\prob_s$-a.s. Thus the limit $\lim_{t\rightarrow\infty}\prob_t$ exists. Abbreviate the limiting measure $\prob$. \end{proof}

Let $T(v)=\lim_{t\rightarrow\infty}T(v,t)$. We can now state the main result of this paper: 
\begin{thm}\label{thm:main}
$\prob(\forall v\in\partial\BH, |T(v)|<\infty)=1$.
\end{thm}
\begin{rem} Note that every vertex in $\BH$ is reached at a finite time a.s. We use this remark in Corollary \ref{infforstmom} which states that the expected height of a tree in $\prob$ is infinity. 
\end{rem}

\subsection{First passage percolation}
In this section we define a first passage percolation model (FPP). In the next section we will couple the $\milf$ with the FPP defined in this section. 

Assign for each edge $e\in\CE$ a weight $\om(e)\sim\exp\left(2^{-h(e)}\right)$ independently of all other edges. We denote the measure on $[0,\infty]^\CE$ so constructed by $\pr$. For every oriented path $\gamma=(e_1,e_2,\ldots,e_n)$ in $\BH$, the length of $\gamma$ is defined to be $\lambda(\gamma)=\sum_{i=1}^n\om(e_i)$. For every two points $x,y\in\BH$ such that $x\in C(y)$ or $y\in C(x)$, let
\[
d_\om(x,y)=\min_{\gamma:x\rightarrow y}\lambda(\gamma)
,\] 
where the minimum is over all finite number of oriented paths in $\BH$ connecting $x$ and $y$. For a point $x\in\BH$ and a set $A\subset\BH$ connected by an oriented path, let $d_\om(x,A)=\inf_{y\in A}d_\om(x,y)$. 
\begin{definition}
For a vertex $x\in\partial\BH$, let $\hat{T}(x)=\bigcup_{y\in\BH}\{\gamma|\gamma \text{ is oriented, }\gamma:x\rightarrow y,\lam(\gamma)=d_\om(y,\partial \BH)\}$ i.e. the union of all oriented paths minimizing the distance from points $y\in\BH$ to $\partial\BH$ starting at the vertex $x$.
\end{definition}  
\begin{rem}
The uniqueness of the path $\gamma:x\rightarrow y,$ such that $\lam(\gamma)=d_\om(y,\partial\BH)$, follows from the independence and  continuity of the distribution of $\{\om(e)\}_{e\in\CE}$. 
\end{rem}
\begin{rem}
Since $\pr$ is a function of i.i.d. random variables, $\pr$ is ergodic under the shift $\theta:\BH \rightarrow\BH$ defined by $\theta(x)= x-\theta_l+\theta_r$.  
\end{rem}
\section{Coupling $\milf$ with FPP}\label{sec:coupling}

Given a FPP process with distribution $\pr$, we construct a $\milf$ process by way of coupling. The construction amounts to associating with each  $x\in\partial\BH$ a set of Poisson clock rings and prescribing the trajectory of each particle. 

To this end we introduce an auxiliary set of independent Poisson clocks. Given an edge $e\in\CE$ we associate with it a Poisson clock of rate $2^{-h(e)}$, which we abbreviate $\text{Poisson}(e)$, such that $\{\text{Poisson}(e)\}_e$ is an independent set of processes, and independent of the FPP measure $\pr$.

We assign a set of rings for $x$ and particle trajectories as follows: For each finite oriented path $\gamma\subseteq\hat{T}(x)\cup\dd \hat{T}(x)$, $\gamma=(e_1,\ldots,e_{l(\gamma)})$ originating at $x$ we assign the following rings: 

\begin{itemize}
\item 
if $\gamma \subset \hat{T}(x)$ we assign the ring $\sum_{i=1}^{l(\gamma)} \om(e_i)$, and the trajectory of the particle will be $\gamma$.

 \item
if $\gamma \nsubseteq \hat{T}(x)$ we assign the ring sequence $\sum_{i = 1}^{l(\gamma)} \om(e_i) $, $\sum_{i = 1}^{l(\gamma)} \om(e_i)+Poisson(e_{l(\gamma)})$, for each ring in this sequence of rings the particle will be assigned the path $\gamma$.
\end{itemize}

\begin{rem}
Note that in the second case, all the particles will vanish, as the vertex at the end of $\gamma$ will be reached sooner by a particle associated to the FPP tree containing it.
\end{rem}
 
We need to show that this construction results in a Poisson clock at $v$ for every $v \in \partial\BH$ with the correct rate. We prove this by showing that the time differences between every two consecutive rings is distributed exponentially with rate 1.


The next lemma is a combinatorial property of finite oriented trees in $\BH$. 
\begin{lem}\label{lem:combcrap}
For every finite oriented tree $T$ in $\BH$ with root $x\in\partial\BH$ and height $n-1$, then 
 \[
 \sum_{i=1}^n\frac{1}{2^i}|\partial^i T|=1
 .\]
\end{lem}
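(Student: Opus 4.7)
The plan is to prove this by giving the sum a probabilistic meaning, from which the identity is transparent. Consider the following auxiliary simple random walk $(X_k)_{k \geq 0}$ on the monotone lattice: set $X_0 = x$; at each step, if $X_k \in T$, pick one of the two outgoing edges $e_l(X_k), e_r(X_k)$ uniformly at random; if that edge lies in $T$, set $X_{k+1}$ to be its endpoint and continue; otherwise (the chosen edge lies in $\partial T$), declare that the walker exits through this boundary edge and terminate. Because $T$ is a finite monotone tree and the walker's height strictly increases at each step, the walker exits after at most $n$ steps with probability one. Thus the exit probabilities over all boundary edges sum to $1$.

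The main step is then to compute the probability that the walker exits through a prescribed boundary edge $e = (u,w) \in \partial S_i$. Since $T$ is a tree and $u \in T$ is at height $i-1$, there is a unique monotone path $\gamma_u$ from $x$ to $u$ inside $T$, of length $i-1$. I claim the walker exits through $e$ if and only if it follows $\gamma_u$ in its first $i-1$ steps and then selects $e$ at step $i$; this event has probability $2^{-(i-1)} \cdot \tfrac{1}{2} = 2^{-i}$. Summing over $e \in \partial T$ and grouping by height then gives
\[
1 \;=\; \sum_{i=1}^n \sum_{e \in \partial S_i} 2^{-i} \;=\; \sum_{i=1}^n \frac{1}{2^i}|\partial S_i|,
\]
which is exactly the claim.

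The main obstacle is justifying that any walker trajectory landing on $u$ while still in $T$ must agree with $\gamma_u$. This is not quite immediate because of the following subtlety: a vertex $w \in T$ at height $i$ may have \emph{both} its parents in $T$, yet by the tree property only one of the two incoming edges belongs to $T$; the other is a boundary edge (whose target happens to lie in $T$). I need to observe that if the walker ever traverses such a non-tree edge, the definition above declares it to have exited through that boundary edge, so it cannot ``continue from $w$ inside $T$''. Consequently, every prefix of the walker's trajectory that keeps it inside $T$ is forced to be the unique tree-path, and in particular the trajectory reaching $u$ in $T$ is $\gamma_u$. Once this is established, the probabilities of the events (exit through $e$) over $e \in \partial T$ are disjoint and cover the full probability space, yielding the identity.

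As a sanity check, the identity is easily verified for the base case $T = \{x\}$, where $|\partial S_1| = 2$ and the sum is $2 \cdot \tfrac{1}{2} = 1$; and the same probabilistic picture can equivalently be repackaged as an induction on $|T|$ where one verifies that attaching a new edge at height $i$ replaces the boundary weight $2^{-i}$ by two new ones of weight $2^{-(i+1)}$ each, preserving the sum.
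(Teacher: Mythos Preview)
Your proof is correct and takes a genuinely different route from the paper's. The paper proceeds by induction on the height $n$: it removes the root and splits $T$ into its two subtrees $T^l$ and $T^r$ (shifted down to $\partial\BH$), uses that $|\partial S_{i+1}| = |\partial S_i^l| + |\partial S_i^r|$, and applies the induction hypothesis to each subtree; the two cases $|\partial S_1|=0$ and $|\partial S_1|=1$ are treated separately. Your argument instead interprets $\sum_i 2^{-i}|\partial S_i|$ as the total mass of the exit distribution of a monotone simple random walk started at $x$ and stopped upon crossing a boundary edge, so the identity becomes the trivial statement that this measure has total mass one.

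What each approach buys: the paper's induction is purely combinatorial and requires no auxiliary randomness, but it is somewhat mechanical and splits into cases. Your probabilistic reading is shorter, avoids any case analysis, and is thematically closer to the model itself --- the walk you use is exactly the particle motion in the $\milf$ description, so the lemma is really saying that a particle's exit edge defines a probability distribution on $\partial T$. Your careful handling of the subtlety (an edge $(v,w)$ with $v,w\in T$ but $(v,w)\notin T$ is still a boundary edge, so the walker exits there rather than continuing inside $T$) is exactly what is needed to conclude that the only trajectory reaching $u\in T$ is the unique tree path $\gamma_u$, and hence that the exit probability through each $e\in\partial S_i$ is precisely $2^{-i}$. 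The alternative induction on $|T|$ you mention at the end (adding one leaf edge trades a boundary weight $2^{-i}$ for two weights $2^{-(i+1)}$) is yet a third valid proof, distinct from both yours and the paper's.
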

\begin{proof}
We prove by induction on $n$. For $n=1$, the tree is empty, thus $|\partial ^1 T|=2$ and for every $i>1$, $|\partial^i T|=0$. We get $\frac{1}{2}2=1$. Now assume the claim is true for $n-1$, let $T$ be a tree of height $n$. If $|\partial^1 T|=0$, denote by $T_r-\theta_r$ and $T_l-\theta_l$ the two subtrees of $T$ contained in $T\setminus\{x\}$ shifted to $\partial \BH$. The subtrees are of height smaller than $n$, and for every $i\le n$, $|\partial^i T_r|+|\partial^i T_l|=|\partial^{i+1} T|$ thus by the induction hypothesis    
\bae
\sum_{i=1}^n\frac{1}{2^i}|\partial^i T|=\sum_{i=1}^{n-1}\frac{1}{2^{i+1}}\left(|\partial^i T_r|+|\partial^i T_l|\right)=\frac{1}{2}+\frac{1}{2}=1.
\eae
If $|\partial^1 T|=1$, assume wlog $T_l=\emptyset$, by the induction hypothesis,
\bae
\sum_{i=1}^n\frac{1}{2^i}|\partial^i T|=\frac{1}{2}|\partial^1 T|+\sum_{i=2}^n\frac{1}{2^i}|\partial^i T|=\frac{1}{2}+\frac{1}{2}\sum_{i=1}^{n-1}\frac{1}{2^i}|\partial^i T_{r}|=1.
\eae 
\end{proof}
\begin{clm} The time differences between every two consecutive rings at any vertex v are independent and are distributed exponentially with rate $1$.
\end{clm}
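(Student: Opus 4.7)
The plan is to compute the conditional intensity of the ring point process $N_v$ at $v$ given the coupling's natural history, show it equals $1$ identically, and conclude that $N_v$ is Poisson of rate $1$ (either by Watanabe's theorem or by iterating competing exponentials via the strong Markov property). The heavy lifting is already done by Lemma~\ref{lem:combcrap}.

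Fix $v\in\dd\BH$ and let $\CF_t$ be the filtration generated by the tree $T(v,s),\,s\le t$, the ring times at $v$, and the particle trajectories up to time $t$. At time $t$, $T:=T(v,t)$ is a finite monotone tree rooted at $v$. For any boundary edge $e\in\dd T$, let $\gamma_e=(e_1,\dots,e_n=e)$ be the unique monotone path from $v$ whose first $n-1$ edges lie in $\hat T(v)$ and end at the source of $e$, and set $s_e=\sum_{i<n}\om(e_i)$. The key observation is that $s_e$ is $\CF_t$-measurable (it equals the time at which the source of $e$ entered $T$) and $s_e\le t$, and that the contribution of $e$ to the intensity at $t^+$ equals $2^{-h(e)}$ in both possible cases: if $e\in\hat T(v)$, the single assigned ring is at $s_e+\om(e)$ with $\om(e)\sim\exp(2^{-h(e)})$ and by memorylessness its residual rate is $2^{-h(e)}$; if $e\in\dd\hat T(v)$, the assigned rings form a $\mathrm{Poisson}(2^{-h(e)})$ process on $[s_e,\infty)$ already active at $t$, again of rate $2^{-h(e)}$.

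Edges $e$ with source outside $T(v,t)$ contribute $0$, since either $s_e+\om(e)>s_e>t$ or the Poisson clock on $e$ has not yet started. Hence, using independence of $\{\om(e)\}_e$ and $\{\mathrm{Poisson}(e)\}_e$ across edges,
\begin{equation*}
\lambda(t)\;=\;\sum_{e\in\dd T(v,t)}2^{-h(e)}\;=\;\sum_{i\ge1}2^{-i}|\dd S_i(T(v,t))|\;=\;1
\end{equation*}
by Lemma~\ref{lem:combcrap}. This identifies the $\CF_t$-compensator of $N_v$ as $t$, and since $N_v$ is a.s.\ simple, Watanabe's theorem gives $N_v\sim\mathrm{Poisson}(1)$, so the inter-ring gaps at $v$ are i.i.d.\ $\exp(1)$. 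The main obstacle is not the combinatorics (which is exactly Lemma~\ref{lem:combcrap}) but the bookkeeping that $e$ may or may not lie in $\hat T(v)$ based on $\om$'s outside $\CF_t$; this is rendered harmless by the observation that both cases contribute the same rate $2^{-h(e)}$, so the case analysis survives averaging over the hidden randomness.
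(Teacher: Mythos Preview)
Your argument is correct and follows essentially the same route as the paper: both reduce the claim to memorylessness of the edge clocks together with Lemma~\ref{lem:combcrap}, which gives $\sum_{e\in\dd T(v,t)}2^{-h(e)}=1$. The paper then iterates this as ``the $(n{+}1)$-st gap is the minimum of independent exponentials over $\dd T(v,t)$, hence $\exp(1)$'' by induction on $n$, while you package the same computation as ``the $\CF_t$-intensity is identically $1$'' and invoke Watanabe; you even flag the direct competing-exponentials route yourself, so the difference is purely cosmetic. Your explicit handling of the $e\in\hat T(v)$ versus $e\in\dd\hat T(v)$ dichotomy (both yielding residual rate $2^{-h(e)}$) makes precise a step the paper absorbs into the phrase ``by the memoryless property of exponential distribution.''
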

\begin{proof}
By induction on the number of rings. The first ring happens at time $\min\{\om(e_r),\om(e_l)\}$ which are distributed exponentially $\om(e_r) \sim \exp(1/2)$, $\om(e_l)\sim \exp(1/2) $, thus their minimum, is distributed $\min\{\om(e_r),\om(e_l)\}\sim \exp(1)$.

Induction step: assuming the first $n$ rings have occurred, we  consider the $n+1^{st}$ interval of ring times. $T(v,t)$ after the $n$-th ring consists of at most $n$ vertices and edges, in particular $|T(v,t)|<\infty$. Let  $w'(e)$   be distributed according to $\pr$ independently from $\om$. By the memoryless property of exponential distribution, the $n+1^{\mbox{\tiny st}}$ interval between ring times is by definition of the coupling, distributed as $\min_{e\in\dd T(v,t)}w'(e)$. We prove by induction that
\bae
\mu_j=\min_{e\in\bigcup_{k=0}^{j}\partial^{n-k} T(v,t)}\{w'(e)\}\sim\exp\left(\frac{1}{2^{n-j}}\sum_{l=0}^{j}\frac{1}{2^{j-l}}\bigg|\partial ^{n-l}T(v,t)\bigg|\right)
.\eae
The base of induction follows as $\mu_0$  is the minimum of $|\partial ^{n}T(v,t)|$ , $\exp\left(\frac{1}{2^n}\right)$ independent random variables. Since 
\[\min_{e\in\partial^{n-j-1} T(v,t)}w'(e)\sim\exp\left(\frac{1}{2^{n-j-1}}\bigg|\partial ^{n-j-1}T(v,t)\bigg|\right),\]
\bae
\mu_{j+1}\sim\min\left\{\mu_j,\min_{e\in\partial^{n-j-1} T(v,t)}w'(e)\right\}\sim \exp\left(\frac{1}{2^{n-j-1}}\sum_{l=0}^{j+1}\frac{1}{2^{j+1-l}}\bigg|\partial ^{n-l}T(v,t)\bigg|\right)
.\eae
Thus proving the internal induction. We obtain by Lemma \ref{lem:combcrap} 
\bae
\mu_n\sim\exp\left(\sum_{l=0}^{n}\frac{1}{2^{n-l}}\bigg|\partial ^{n-l}T(v,t)\bigg|\right)\sim\exp(1).
\eae
\end{proof}


\section{Finite trees}
In this section we will prove the main result of this paper.
\begin{thm}\label{thm:finitefpp}
Given a FPP on $\BH$ distributed according to $\pr$, i.e. with weights $w(e)\sim\exp\left(2^{-h(e)}\right)$, almost surely all trees are finite, i.e.\[
\pr(|\hat{T}(0)|<\infty)=1
.\]  
\end{thm}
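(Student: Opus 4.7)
Set $F_n := \hat T(0) \cap \{v : h(v) = n\}$. A level-$n$ vertex of $\hat T(0)$ lies on a geodesic back to $0$ contained in $\hat T(0)$, which visits exactly one vertex at every intermediate level; hence the events $\{F_n \neq \emptyset\}$ are nested decreasing in $n$, and
\[
\pr(|\hat T(0)| = \infty) = \lim_{n \to \infty} \pr(F_n \neq \emptyset).
\]
The plan is therefore to prove $\pr(F_n \neq \emptyset) \to 0$. First I would establish the mass-transport identity $\ev[|F_n|] = 1$ for every $n$: the trees $\{\hat T(v)\}_{v \in \partial \BH}$ partition $\BH$, translation invariance of $\pr$ forces $\ev[|F_n|]$ to be independent of the root, and the matched densities of boundary roots and level-$n$ vertices (both $1/2$ per unit horizontal length) pin this common expectation at $1$.

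Next, the elementary identity
\[
\pr(F_n \neq \emptyset) \;=\; \frac{\ev[|F_n|]}{\ev[|F_n| \mid F_n \neq \emptyset]} \;=\; \frac{1}{\ev[|F_n| \mid F_n \neq \emptyset]}
\]
reduces the theorem to $\ev[|F_n| \mid F_n \neq \emptyset] \to \infty$, which by Cauchy--Schwarz follows from $\ev[|F_n|^2] \to \infty$. Heuristically this is the classical criticality mechanism: the average level-$n$ mass is pinned at $1$, so on the rare event that $\hat T(0)$ survives to level $n$ it must spread out and become increasingly wide.

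The main obstacle is proving $\ev[|F_n|^2] \to \infty$. Expanding,
\[
\ev[|F_n|^2] \;=\; \sum_{y,\, y' \in C(0),\; h(y) = h(y') = n} \pr\bigl(y \in \hat T(0),\, y' \in \hat T(0)\bigr).
\]
For $y \neq y'$ both in $\hat T(0)$, the two geodesics to $0$ share a common prefix up to their most-recent common ancestor $a \in \hat T(0)$, which must itself have \emph{both} children in $\hat T(0)$; thus the off-diagonal contribution is organised as a sum over such $a$ at each level $k < n$. I would condition on $a$ and on the prefix $0 \to a$, and then use memorylessness of the exponential weights --- together with Lemma~\ref{lem:combcrap}, which renormalises the escape rates along $\partial \hat T(0)$ --- to approximately decouple the two subtrees rooted at the children of $a$. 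The resulting recursive lower bound should make $\ev[|F_n|^2]$ grow at least linearly in $n$, mirroring the $\pr(Z_n > 0) \sim c/n$ decay for the classical critical Galton--Watson process. The principal technical difficulty is that $\hat T(0)$ is not genuinely Markov: offspring counts at different vertices are coupled through shared edges with neighbouring trees, and a careful conditioning on the outer boundary $\partial \hat T(0)$ is needed to disentangle contributions above and below $a$ while preserving the exponential-rate structure.
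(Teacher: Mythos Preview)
Your reduction in the third step is backwards. Cauchy--Schwarz (Paley--Zygmund) applied to $|F_n|=|F_n|\cdot\ind_{\{F_n\neq\emptyset\}}$ gives
\[
1=\ev[|F_n|]^2\le \ev[|F_n|^2]\,\pr(F_n\neq\emptyset),
\]
i.e.\ $\pr(F_n\neq\emptyset)\ge 1/\ev[|F_n|^2]$, a \emph{lower} bound on survival. Equivalently one gets $\ev[|F_n|\mid F_n\neq\emptyset]\le\ev[|F_n|^2]$, which is the wrong inequality for your purpose. So even a complete proof that $\ev[|F_n|^2]\to\infty$ would not yield $\pr(F_n\neq\emptyset)\to 0$; second moments are the standard device for proving \emph{survival}, not extinction, and a sequence with $\ev[X_n]=1$, $\pr(X_n>0)\equiv 1$, $\ev[X_n^2]\to\infty$ shows there is no general reverse implication. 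In the critical Galton--Watson analogy you invoke, the decay $\pr(Z_n>0)\sim c/n$ is not deduced from the second moment but from the generating-function recursion.

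The paper begins from the same width observation --- by ergodicity, $\ev\bigl[|F_n|\,\big|\,|\hat T(0)|=\infty\bigr]\le 1/\beta$ uniformly in $n$, where $\beta=\pr(|\hat T(0)|=\infty)$ --- but then proceeds by a \emph{killing} argument rather than moments. Markov's inequality plus Fatou force an infinite tree to be ``slim'' (width below a fixed $D$) at infinitely many levels $n_1<n_2<\cdots$. At each slim level $n_k$ one shows, via a stochastic-domination bound on $d_\om(l_{n_k},\partial\BH)$ conditional on the tree so far, that with probability bounded away from $0$ the two neighbouring trees capture the entire upper boundary of the triangle $\Delta(n_k)$ sitting on $F_{n_k}$, cutting $\hat T(0)$ off. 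Iterating over the infinitely many slim levels gives the contradiction. The real work lies in that conditional domination lemma, not in any moment computation.
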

\begin{proof}
Assume for the purpose of contradiction the existence of an infinite tree. Then by shift invariance, $\beta:=\pr(|\hat{T}(0)|=\infty)>0$. 

Remember that $\hat{T}^m(x) = \{v| h(v)=m, v\in \hat{T}(x)\}.$ By the ergodic theorem we have
\bae
\frac{1}{2n+1}\sum_{x=-n}^{n}|\hat{T}^m(x)|\ind_{|\hat{T}(x)|=\infty}&\underset{n\rightarrow\infty}{\longrightarrow}\BE\left[|\hat{T}^m(0)|\big|\hat{T}(0)=\infty\right]\cdot\pr(|\hat{T}(0)|=\infty)\\
&=\beta\cdot\BE\left[|\hat{T}^m(0)|\big||\hat{T}(0)=\infty|\right].
\eae
Since all the trees are oriented, for every $x\in\partial\BH$, the tree $\hat{T}(x)$  resides in the cone $C(x)$. Thus
\[
\frac{1}{2n+1}\sum_{x=-n}^{n}|\hat{T}^m(x)|\ind_{|\hat{T}(x)|=\infty}\le\frac{1}{2n+1}\sum_{x=-n}^{n}|\hat{T}^m(x)|\le\frac{2n+2m+1}{2n+1}\underset{n\rightarrow\infty}{\longrightarrow}1,
\]
and we get 
\[
\BE\left[|\hat{T}^m(0)|\big|~|\hat{T}(0)|=\infty\right]\le\frac{1}{\beta}
.\]
Fix $\delta<1$,  $D=\frac{1}{\beta\cdot\delta}$, by Markov's inequality
\bae
\pr\left(|\hat{T}^n(0)|>D\big|\;|\hat{T}(0)|=\infty\right)\le\delta
\eae 
\begin{definition} A tree rooted at $v$ is called slim, if $0<|\hat{T}^n(v)|<D$ for infinitely many n's. We say that a tree is slim at level $k$ if $0<|\hat{T}^k(0)|<D$.
\end{definition}

$\hat{T}(0)$ is slim with probability greater than $1-\delta$ by the estimation 
\bae
\pr\left(\hat{T}(0)\text{ is not slim}\big|\;|\hat{T}(0)|=\infty\right)&= \pr\left(\bigcup_{n=1}^{\infty}\bigcap_{m=n}^{\infty}\{|\hat{T}^n(0)|>D\}\big|\;|\hat{T}(0)|=\infty\right)\\
&=\pr\left(\liminf_{n\rightarrow\infty}\{|\hat{T}^n(0)|>D\}\big|~ |\hat{T}(0)|=\infty\right)\\
&\le\liminf_n\pr\left(|\hat{T}^n(0)|>D\big|\;|\hat{T}(0)|=\infty\right)\le\delta.
\eae
By assuming existence of infinite trees we obtain a positive density of slim trees. We will reach a contradiction by showing that the probability of a tree being slim is $0$. 
\begin{definition}
Let $r_{n}=(\max\{s:(s,n)\in \hat{T}^n(0)\}+2,n)$ be the vertex to the right of $\hat{T}^{n}(0)$ and let $l_n$ be the vertex to the left of $\hat{T}^{n}(0)$. For every $n\in\BN$ denote $\Delta(n)=\BH\cap\text{Convex hull}\{l_{n},r_{n},l_{n}+\left(|\hat{T}^{n}(0)|+1\right)\theta_r\}$, the triangle based in $\hat{T}^{n}(0)\cup l_{n}\cup r_{n}$. See Figure \ref{fig:deltak} for clarifications. 
\end{definition}
\begin{lem}\label{lem:stocdomln}
For every $\kappa>1$, $\pr( d_\om(l_n,\partial\BH)>\kappa2^{n+1}|\sigma(\{\om(e):e\in\bigcup^n_{i=1} \hat{T}^i(0)\}))\le\frac{1}{\kappa}<1$ a.s.
\end{lem}
\begin{proof}
Let $w_i\sim\text{exp}(2^{-i})$, with law $Q$, be independent of each other and of $\pr$.
We first prove by induction on $n$ that $d_\om(l_n,\partial\BH)$ is stochastically dominated by $\sum_{i=1}^n w_i$. For $n=1$, if $T^1(0)$ is $\{\theta_r\}$, then $\om((\theta_l,0))>\om((l_1-\theta_r,l_1))$. $\om((l_1-\theta_r,l_1))$ is independent of $\hat{T}^1(0)$, and in particular $d(l_1,\partial\BH)$ is stochastically dominated by $w_1$. If $\hat{T}^1(0)$ is $\{\theta_l\}$ or $\{\theta_l,\theta_r\}$, $d(l_1,\partial\BH)=\min\{\om(-2,-2+\theta_l),\om(-4,-4+\theta_r)\}$, both are independent of $\hat{T}^1(0)$, and in particular dominated by $w_1$. Assume claim for $l_{n-1}$, if $l_n=l_{n-1}+\theta_l$, since there is no oriented path connecting $\hat{T}^n(0)$ with the edge $(l_{n-1},l_n)$, then $\om(l_{n-1},l_n)$ is independent of $\bigcup_{i=1}^n \hat{T}^i(0)$, and thus dominated by $w_n$. Since $d_\om(l_n,\partial\BH)\le d(l_{n-1},\partial\BH)+\om(l_{n-1},l_n)$, the claim follows by induction. If $l_n=l_{n-1}+\theta_r$, then $d_\om(l_n,\partial\BH)<d_\om(l_n-\theta_l,\partial\BH)+\om(l_n,l_n-\theta_l)$ . Thus conditioned on the weights of $\bigcup_{i=1}^{n-1} \hat{T}^i(0)$, and the structure of the tree, we obtain that
\bae\label{eq:stocdom}
0\le\om(l_{n-1},l_n)\le\om(l_n-\theta_l,l_n)+d(l_n-\theta_l,\partial\BH)-d(l_{n-1},\partial\BH) .\eae
Since the random variables on the RHS of \ref{eq:stocdom} are independent (without the conditioning) of $\om(l_{n-1},l_n)$, we obtain that $\om(l_{n-1},l_n)$ is conditionally dominated by $w_n$.
This is since for two independent random variables $X$ and $Y$, one has $\prob(X>t|X<Y)\le\prob(X>t)$.
Thus we get by the induction hypotheses that $d_\om(l_n,\partial\BH)\le d(l_{n-1},\partial\BH)+\om(l_{n-1},l_n)$ is stochastically dominated by $\sum_{i=1}^n w_i$. Now
\bae
\pr\left(d_\om(l_{n},\partial\BH)>\kappa2^{n+1}|\sigma(\{\om(e):e\in\bigcup^n_{i=1} \hat{T}^i(0)\})\right)
&\le{ Q\left(\sum_{i=1}^{n}w_i>\kappa E_Q\left[\sum_{i=1}^{n}w_i\right]\right)}\\
&\le\frac{1}{\kappa}<1. \eae 
\end{proof}

\begin{figure}[H]
\begin{center}
\includegraphics[width=0.4\textwidth]{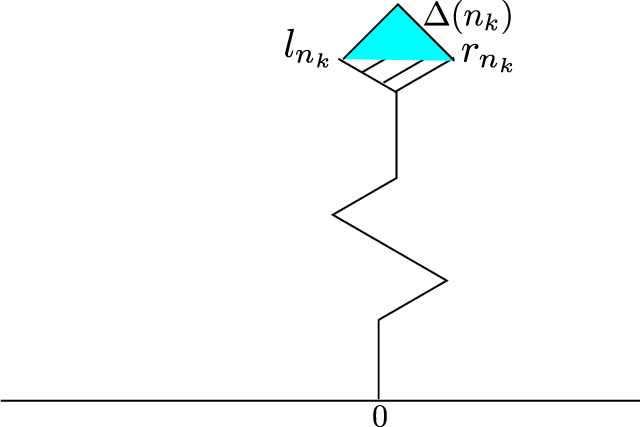}
\caption{Killing a slim tree.\label{fig:deltak}}
\end{center}
\end{figure}
 
Let $M_{n}=\max\{d_\om(l_{n},\partial\BH),d_\om(r_{n},\partial\BH)\}$.
Conditioned on the event that $\hat{T}(0)$ was slim in levels $n_1,\ldots,n_k$ such that $n_{m+1}-n_m>D$ and $2^{n_{k}+1}>M_{n_{k-1}}$, $m=1,\ldots,k-1$, we show that the probability there exists a level $l\ge n_k+D$ where the tree is slim is bounded away from 1.
 
Every edge $e\in\Delta(n_k)$ has weight distribution $\om(e)\sim\exp(2^{-h(e)})=\exp(2^{-n_k-l})$ where $0\leq l\leq D+1$. Using the exponential distribution properties $w(e)\sim2^{n_k}\exp(2^{-l})$. 

The idea that will follow is to show that with positive probability $\partial^{in} \Delta(n_k)\setminus \hat{T}^{n_k}(0)$ belongs to the union of the trees of $r_{n_k}$ and $l_{n_k}$ thus killing the tree rooted at $0$. To this end let $w_i\sim\text{exp}(2^{-i})$, be independent of each other and of $\pr$. We denote the measure so constructed by $Q$. By Lemma \ref{lem:stocdomln} (note that the conditioning is hiding in the notation $l_{n_k}$) we obtain that
\bae
\pr(d_\om(l_{n_{k}},\partial\BH)>M_{n_{k-1}}+\kappa2^{n_{k}+1})\le\pr(d_\om(l_{n_{k}},\partial\BH)>\kappa2^{n_{k}+1})\le\frac{1}{\kappa}<1. \eae 
With probability bounded away from zero and independent of all the levels lower than $n_{k}$, all (finite number) edges $e\in\Delta({n_k})$ will have weights larger than $\om(e)\ge\kappa2^{2D}\BE[\om(e)]\ge\kappa 2^{2D}2^{n_k}$, and all edges $e'=(x,y),\{x,y\}\in\partial^{in}\Delta({n_k})\setminus \hat{T}^{n_{k}}(0)$ will have weights smaller than $\om(e')\le\BE[\om(e')]$. Under this event, for every edge $e\in\partial^{in}\Delta({n_k})\setminus \hat{T}^{n_{k}}(0)$, $\omega(e)\le 2^{n_k+D}$. This yields,
\[
\sum_{e\in\partial^{in}\Delta({n_k})\setminus \hat{T}^{n_{k}}(0)} \omega(e)\le 2D\cdot2^{n_k+D}<\kappa 2^{2D}2^{n_k}
.\]
By the choice of $n_k$ we obtain that under the previous event $M_{n_k}+\sum_{e\in\partial^{in}\Delta({n_k})\setminus \hat{T}^{n_{k}}(0)} \omega(e)$ is smaller than the weight of a single edge in $\Delta({n_k})$, thus any geodesic that hits $\Delta({n_k})$ will not connect to $T^{n_k}(0)$.
We get that $\partial^{in}\Delta({n_k})\setminus \hat{T}^{n_{k}}(0)\notin \hat{T}(0)$. 
 
\end{proof}
\begin{cor}\label{infforstmom}
$\ev[h(\hat{T}(0))]=\infty$
\end{cor}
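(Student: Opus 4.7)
The plan is to combine a first-moment estimate for each level of the tree with the deterministic bound that a monotone tree rooted at height $0$ contains at most $m+1$ vertices at height $m$. The ergodic argument that opens the proof of the Theorem, applied now \emph{without} the indicator $\mathbf{1}_{|T(x)|=\infty}$, shows that $\ev\bigl[\,|T^m(0)|\,\bigr]\le 1$ for every $m\ge 0$; in fact equality holds because the remark preceding the Corollary ensures that every level-$m$ vertex of $\BH$ a.s.\ belongs to exactly one tree.

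Because $T(0)$ is monotone and rooted at $0$, it sits inside the cone $C(0)$, whose slice at height $m$ consists of exactly $m+1$ sites of $\BH$; hence $|T^m(0)|\le m+1$ almost surely. A reverse Markov inequality therefore gives
$$\pr\bigl(T^m(0)\ne\emptyset\bigr) \;\ge\; \frac{\ev\bigl[\,|T^m(0)|\,\bigr]}{m+1} \;\ge\; \frac{1}{m+1}.$$
Since $T(0)$ is connected by monotone edges starting at $0$, any vertex of $T(0)$ at height $\ge m$ forces $T(0)$ to meet every intermediate level, so $\{h(T(0))\ge m\}=\{T^m(0)\ne\emptyset\}$.

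Combining these observations via the tail-sum formula for a nonnegative integer-valued random variable yields
$$\ev\bigl[h(T(0))\bigr] \;=\; \sum_{m=1}^\infty \pr\bigl(h(T(0))\ge m\bigr) \;\ge\; \sum_{m=1}^\infty \frac{1}{m+1} \;=\; \infty.$$
There is no real obstacle in this argument; the content of the Corollary is that, although the Theorem shows $h(T(0))<\infty$ almost surely, the tail of $h(T(0))$ is thick enough (decaying no faster than harmonically) that its mean diverges.
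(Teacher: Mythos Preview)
Your proof is correct, and it follows a genuinely different route from the paper's.

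The paper argues by contradiction via Borel--Cantelli: assuming $\ev[h(T(0))]<\infty$, shift invariance turns the tail sum into $\sum_{i}\prob(h(T(i))\ge |i|)<\infty$, so almost surely only finitely many roots $i$ have $h(T(i))\ge |i|$. Combined with the main Theorem (all trees are finite), this forces infinitely many vertices of $C(0)$ to remain uncovered, contradicting the covering remark.

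Your argument is direct: from the covering remark and shift invariance you extract $\ev[|T^m(0)|]=1$, and then the cone bound $|T^m(0)|\le m+1$ plus the elementary inequality $\ev[X]\le M\,\pr(X>0)$ give $\pr(h(T(0))\ge m)\ge 1/(m+1)$, whose sum diverges. Two points worth noting in the comparison: (i) your proof does not use the main Theorem at all, so it is logically independent of the finiteness result, whereas the paper's proof invokes it to rule out that one of the finitely many ``tall'' trees covers $C(0)$; (ii) your argument yields the explicit quantitative tail bound $\pr(h(T(0))\ge m)\ge 1/(m+1)$, which the paper's contradiction argument does not produce. Both proofs rest on the same two structural facts---every site is covered, and the cone constraint---but package them differently.
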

\begin{proof}
Assume for the purpose of contradiction that $\ev[h(\hat{T}(0))]<\infty$, thus 
\bae
\sum_{i=1}^\infty\prob(h(\hat{T}(0))\ge i)=\sum_{i=1}^\infty\prob(h(\hat{T}(i))\ge i)\le\frac{1}{2}\sum_{i=-\infty}^\infty\prob(h(\hat{T}(i))\ge |i|)<\infty.
\eae
By Borel-Cantelli, for all but a finite number $i$'s, $h(\hat{T}(i))<|i|$. Since all trees have finite height, there are infinitely many vertices in $C(0)$ that are not covered a.s. This is a contradiction to the construction of the $\milf$.   
\end{proof}
\begin{proof}[Proof of theorem \ref{thm:main}]
By the coupling of Section \ref{sec:coupling}, $\prob(|T(0)|<\infty)=\pr(|\hat{T}(0)|<\infty)$. By Theorem \ref{thm:finitefpp}, $\pr(|\hat{T}(0)|<\infty)=1$.
\end{proof}
\begin{rem}\label{rem:zerner}
An interesting question that so far evades rigorous proof is that of the correct decay of tree height. In \cite{MR1880239}, Zerner and Merkl presented a variation of the next forest model. Let $\mathbf{Z}$ be a measure on $\{0,1\}^\CE$ defined as follows: from each vertex $v\in\BH$ with $h(v)>0$, 
\bae
&\mathbf{Z}((v,v-\theta_r)=1,(v,v-\theta_l)=0)=\frac{1}{2}\\
&\mathbf{Z}((v,v-\theta_r)=0,(v,v-\theta_l)=1)=\frac{1}{2}
.\eae
Zerner and Merkl proved the that the height of trees under $\mathbf{Z}$ have a $\frac{1}{2}$ moment, by coupling an exploration process that surrounds the trees with two independent simple random walks. The tree is bounded by the trajectories of the random walks until the first time they meet.  
Since the SIDLA process is coupled to a FPP model with exponentially increasing weights, the law of the SIDLA is very close to $\mathbf{Z}$. We conjecture that SIDLA has $\frac{1}{2}-\epsilon$ moment for some small $\epsilon>0$. 
\end{rem}

\section{Analogous result for different FPP}
Once one sees the finite trees result for the FPP with exponentially increasing weights, one may ask if this phenomenon is preserved for different FPP measures e.g. a FPP with exponentially decreasing weights.

Let $\BS$ be a FPP measure on $\BH$ such that $\om(e)\sim\exp\left(2^{h(e)}\right)$, and abbreviate \[S(x)=\bigcup_{y\in\BH}\{\gamma,\text{ oriented }|\gamma:x\rightarrow y,l(\gamma)=d_\om(y,\partial \BH)\}.\] 
\begin{figure}[H]
\begin{center}
\includegraphics[width=0.5\textwidth]{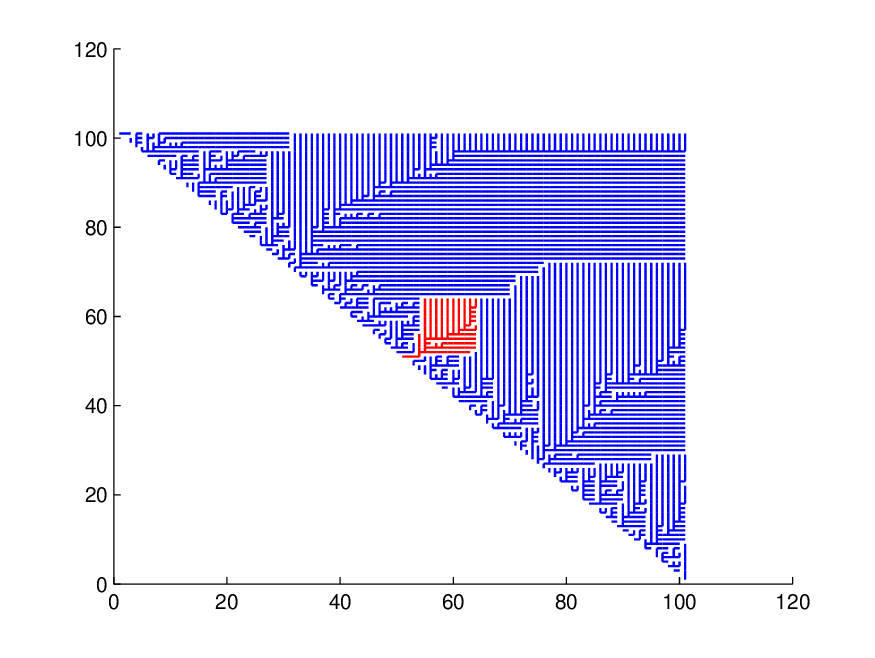}
\caption{FPP with decreasing weights\label{fig:idladecrease}}
\end{center}
\end{figure}
\begin{thm}
$S(0)$ is finite $\BS$ a.s.
\end{thm}
\begin{proof}
Denote by $a = \min\{\omega((0,\theta_r)),\omega((0,\theta_l))\}$. Let $l$ be the minimal integer such that 
\[\sum_{i=l}^{\infty}\left(2^{-i}+i\cdot2^{-i}\right)<\frac{a}{3}.\]
Consider $A^l(a)=\{0<v\in\dd\BH\: |\: \sum_{i=0}^{l-1} \omega\left((v+i\cdot \theta_l,v+(i+1)\cdot \theta_l)\right)<a/3 \}.$ Note that by shift invariance this set is infinite. 

\bae\label{eq:posray}
&\pr\left(\sum_{i=l}^{\infty} \omega\left((v+i\cdot \theta_l,v+(i+1)\cdot \theta_l)\right)<a/3\right)\ge
\\&\pr\left(\bigcap_{i=l}^{\infty}\left\{\omega((v+i\cdot\theta_l(v),v+(i+1)\cdot \theta_l))<2^{-i}+i\cdot2^{-i}\right\}\right)
\geq\ \prod_{i =l}^{\infty}\left(1-\frac{1}{i^2}\right)>0,\eae
where the one before last inequality follows from Chebyshev.

For every $v\in A^l(a)$, the events $\{\sum_{i=l}^{\infty} \omega\left((v+i\cdot \theta_l,v+(i+1)\cdot \theta_l)\right)<a/3\}$ and 
\[\left\{\sum_{i=0}^{l-1} \omega\left((v+i\cdot \theta_l,v+(i+1)\cdot \theta_l)\right)<a/3\right\},\] are independent. Thus by \eqref{eq:posray} There exists some $v\in A^l(a)$ such that 
\[
\sum_{i=0}^{\infty} \omega\left((v+i\cdot \theta_l,v+(i+1)\cdot \theta_l)\right)<\frac{2a}{3}<a,\]
thus the path $\bigcup_i\{v+i\cdot \theta_l\}\notin T(0)$. By symmetrical arguments there exists some $v'<0$ with $\bigcup_i\{v'+i\cdot \theta_r\}\notin T(0)$, thus $T(0)$ is finite. 
\end{proof}
\begin{rem}
An interesting open question is that of finite trees in the i.i.d case on $\BH$. i.e. $\om(e)\sim\exp\left(1\right)$. It has some relations to the Eden model on $\BH$. Similar to the Eden model each edge on the boundary of a tree is attempted to be added with equal probability. Under the coupling scheme of Section \ref{sec:coupling} bigger trees grow in a greater rate than smaller trees.  
\end{rem}
\section*{Acknowledgments}
The authors wish to thank Itai Benjamini for suggesting this problem and helpful discussions. One of the authors would like to thank Ohad Feldheim for a fruitful discussion.

\bibliography{IDLAT}

\begin{thebibliography}{LBG92}

\bibitem[AG10]{asselah2010note}
A.~Asselah and A.~Gaudilliere.
\newblock A note on fluctuations for internal diffusion limited aggregation.
\newblock {\em Arxiv preprint arXiv:1004.4665}, 2010.

\bibitem[DF91]{diaconis1991growth}
P.~Diaconis and W.~Fulton.
\newblock A growth model, a game, an algebra, lagrange inversion, and
  characteristic classes.
\newblock {\em Rend. Sem. Mat. Univ. Pol. Torino}, 49(1):95--119, 1991.

\bibitem[JLS12]{jerison2012logarithmic}
David Jerison, Lionel Levine, and Scott Sheffield.
\newblock Logarithmic fluctuations for internal {DLA}.
\newblock {\em J. Amer. Math. Soc.}, 25(1):271--301, 2012.

\bibitem[LBG92]{1992}
Gregory~F. Lawler, Maury Bramson, and David Griffeath.
\newblock Internal diffusion limited aggregation.
\newblock {\em The Annals of Probability}, 20(4):pp. 2117--2140, 1992.

\bibitem[MD86]{meakin1986formation}
P.~Meakin and JM~Deutch.
\newblock The formation of surfaces by diffusion limited annihilation.
\newblock {\em The Journal of chemical physics}, 85:2320, 1986.

\bibitem[ZM01]{MR1880239}
Martin P.~W. Zerner and Franz Merkl.
\newblock A zero-one law for planar random walks in random environment.
\newblock {\em The Annals of Probability}, 29(4):1716--1732, 2001.

\end{thebibliography}
\bibliographystyle{alpha}
\end{document}